\documentclass[12pt]{amsart}

\usepackage{amssymb}
\usepackage{amsthm}
\usepackage{amsmath}
\usepackage{amsfonts}
\usepackage{amsfonts}
\usepackage{latexsym} 
\usepackage{eucal}
\usepackage{indentfirst} 

\newtheorem{theorem}{Theorem}[section]

\newtheorem{lemma}[theorem]{Lemma}

\newtheorem{remark}[theorem]{Remark}

\begin{document}

\def \cd {, \ldots ,}
\def \lf{\| f \|}
\def \ep{\varepsilon}

\def \pickmatrix{\left(\frac{1-w_j \overline w_k} {1-z_j \overline z_k}\right)_{j,k=1 \cd n}}

\subjclass[2000]{Primary 30E05, 30H50, 46J10, 46J15}
\keywords{Pick interpolation, peak interpolation, Rudin-Carleson theorem}

\title[Pick and Peak Interpolation]{Pick and Peak Interpolation}
\author{Alexander J. Izzo}
\address{Department of Mathematics and Statistics, Bowling Green State University, Bowling Green, OH 43403}
\email{aizzo@bgsu.edu}

\begin{abstract}
We show how Pick interpolation and interpolation on peak interpolation sets can be combined in an abstract uniform algebra setting.  In particular as a special case, the Rudin-Carleson theorem can be combined with the classical Pick interpolation theorem on the disc.  
\end{abstract}

\maketitle

\section{Introduction}
The classical Pick interpolation problem is this: given an $n$-tuple $(z_1 \cd z_n)$ of distinct points in the open unit disc $D$, for which $n$-tuples $(w_1 \cd w_n)$ of complex numbers does there exist a bounded holomorphic function $f$ on $D$ with supremum norm $\lf \le 1$ such that
$$f(z_j) = w_j,\ 1\le j \le n?   \leqno (*)$$
The answer is given by Pick's theorem \cite {Pi} which says that the desired function $f$ exists if and only if the matrix 
$$\pickmatrix$$
is positive semi-definite.  The proof of 
% page 2
Pick's theorem can be found in many places, for instance \cite {Gar}.  When the Pick interpolation problem has a solution, the solution can, in fact, be taken to be a finite Blaschke product.  Thus there is no change in Pick's problem if in the statement of the problem we require the function to be in the disc algebra $A(D)$.  (The disc algebra is the algebra of all continuous complex-valued functions on the closed disc $\overline D$ that are holomorphic on $D$.)  However, it then becomes natural to consider a slightly more general situation in which some of the points $z_1 \cd z_n$ are allowed to lie on the boundary of $D$.  Actually we will consider something even more general; we will ask for interpolation at a finite number of points in the open disc together with interpolation on a peak interpolaton set on the boundary.  However, to have a tractable problem, a slight reformulation is needed.  We will replace the condition that $\lf \le 1$ by the requirement that for every $\ep > 0$ there is a solution $f$ to the interpolation problem with $\lf \le 1 + \ep$.  Note that this reformulation does not change the classical Pick problem;
% page 3
if for every $\ep > 0$ there is a bounded holomophic function $f$ with 
$\lf \le 1 + \ep$ satisfying $\displaystyle(*)$, then a normal families argument shows that there is a bounded holomorphic function $f$ with $\lf \le 1$ satisfying $\displaystyle(*)$.

Generalization of Pick's problem to an abstract uniform algebra context has been extensively studied by Brian Cole, Keith Lewis, and John Wermer \cite {CoLeWe, CoWe, We}, 
and in that context they make use of the reformulation involving 
$1 +\ep$ just mentioned.  It arises naturally from considering a quotient norm.  Let $A$ be a uniform algebra on a compact space $X$, that is, an algebra of 
complex-valued continuous functions on $X$ that contains the constant functions, separates the points of $X$, and is uniformly closed in the algebra $C(X)$ of all complex-valued continuous functions on $X$.
Fix $n$ points $z_1 \cd z_n$ in $X$.  Define
$$I = \{ f \in A : f (z_j) = 0, 1 \le j \le n \}.$$
Then $I$ is a closed ideal in $A$.  The quotient algebra $A/I$ of $A$ by the ideal $I$ consists of all cosets $[f]$ relative to $I$ where $f \in A$.  The norm of $[f]$ in $A/I$ is given by 
$$\| [f] \| = \inf \{ \| g \| : g \in [f] \}.$$
Let $w_1 \cd w_n$, be fixed complex numbers, and let $f$ be a function in $A$ such that $f (z_j) = w_j, 1 \le j \le n$.
Then the condition that $\| [f] \| \le 1$ is exactly the 
% page 4
condition that for every $\ep > 0$ there is a function 
$f_\ep \in A$  such that $f_\ep (z_j) = w_j, 1 \le j \le n$, with $\| f_\ep \| \le 1 + \ep$.

The proof of our theorem that Pick interpolation can be combined with interpolation on a peak interpolation set is no more difficult in an abstract uniform algebra context.  We will state and prove the theorem in that context and then show how it specializes to the disc algebra.  

This paper grew out of conversations that the author had with John Wermer.  The author thanks Wermer for these valuable conversations.
% page 5

\section{Pick and peak interpolation}

Given a uniform algebra $A$ on a compact space $X$ and a subset $E$ of $X$, a function $g$ in $A$ is said to {\it peak\/} on $E$ if 
$g= 1$ on $E$ and 
$|g| < 1$ on $X \setminus E$.  
The subset $E$ of $X$ is said to be a {\it peak interpolation set\/} for $A$ if for every nonzero function $f$ in $C(E)$ there is a function $F$ in $A$ such that $F|E=f$ and $|F(x)|< \|f\|$ for every $x$ in $X\setminus E$.  Our main theorem is as follows. 
 
\begin{theorem}\label{Proposition2.1}
Suppose $A$ is a uniform algebra on a compact space $X$.   Fix a peak interpolation set $E$ for $A$ and points 
$\alpha_1 \cd \alpha_n$ in $X \setminus E$.  Also fix a continuous complex-valued function $f$ on $E$ with $\lf \le 1$ and complex numbers $w_1 \cd w_n$.  If for each $\ep > 0$ there exists a function $F$ in $A$ with $\| F \| \le 1 + \ep$ such that 
$F (\alpha_j) = w_j, 1 \le j \le n$, then the function $F$ can be chosen so that in addition $F|E=f$.
\end{theorem}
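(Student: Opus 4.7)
My approach is an iterative Bishop-style construction: inductively build $F_k \in A$ with $F_k(\alpha_j) = w_j$ for every $k$ and $j$, such that $F_k|E \to f$ and the limit $F = \lim F_k$ satisfies $\|F\| \le 1 + \ep$.

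Two preparatory facts set up the iteration. First is a strengthened peak-interpolation lemma: for any $\psi \in C(E)$ and any $\delta > 0$, there exists $\Delta \in A$ with $\Delta|E = \psi$, $\|\Delta\|_X \le \|\psi\|_E$, and $|\Delta(\alpha_j)| < \delta$ for each $j$. Since $E$ is a peak interpolation set it is in particular a peak set, so a function $g \in A$ with $g|E = 1$ and $|g(\alpha_j)| < 1$ exists; taking $H \in A$ a standard peak interpolation of $\psi$ and setting $\Delta = g^N H$ for sufficiently large $N$ gives the result, since $|\Delta(\alpha_j)| \le |g(\alpha_j)|^N \|\psi\|_E \to 0$. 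Second, because $A$ separates points and $\{\alpha_j\} \cap E = \emptyset$, the ideal $\{F \in A : F|E = 0\}$ (which contains $1-g$) separates $\alpha_1 \cd \alpha_n$; by finite-dimensional linear algebra there exist $L_1 \cd L_n \in A$ with $L_i|E = 0$, $L_i(\alpha_j) = \delta_{ij}$, and a constant $C$ so that $\|\sum v_j L_j\| \le C\max_j|v_j|$ for any values $v_j$. The iteration then goes: pick $F_0$ from the Pick hypothesis with $\|F_0\| \le 1+\eta_0$; given $F_k$ with $F_k(\alpha_j) = w_j$, set $\phi_k = f - F_k|E$, produce $\Delta_k$ via the strengthened lemma with $|\Delta_k(\alpha_j)| < \delta_k$, put $M_k = \sum_j \Delta_k(\alpha_j) L_j$, and define $F_{k+1} = F_k + \Delta_k - M_k$, which preserves $F_{k+1}(\alpha_j) = w_j$ and forces $F_{k+1}|E = f$.

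The main obstacle is norm control. Taken at face value, a single step already produces $F_1$ satisfying $F_1|E = f$ and $F_1(\alpha_j) = w_j$, but $\|F_1\|$ may be as large as $1 + \eta_0 + \|\phi_0\|_E \approx 3$, since a priori $\|\phi_0\|_E \le \|f\|_E + \|F_0\|_X \le 2 + \eta_0$. Avoiding this blow-up is the true content of the theorem and requires invoking the hypothesis at every stage rather than only to produce $F_0$. The resolution I would pursue is to correct only a small fraction of the error on $E$ at each step, reapplying the Pick hypothesis to slightly shifted target values so that the increments $F_{k+1} - F_k$ shrink geometrically and sum to a function of norm at most $\ep$. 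Verifying that each perturbed Pick problem is solvable with the required norm bound is the crux, and should follow from a careful quantitative use of the ``for every $\ep > 0$'' form of the hypothesis, combined with the fact that $\Delta_k = g^N H_k$ has $|\Delta_k(x)| \to 0$ pointwise off $E$, so that $F_k + \Delta_k$ enjoys real cancellation (essentially $F_k + \Delta_k \approx f$ near $E$ and $\approx F_k$ elsewhere) rather than just the crude triangle-inequality bound.
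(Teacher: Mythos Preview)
Your setup is sound: the Lagrange-type correctors $L_j$ and the strengthened peak-interpolation lemma are the right auxiliary tools (they match the paper's $\varphi_j$ and its function $h$), and you correctly isolate norm control as the heart of the matter. But neither proposed fix actually overcomes the obstacle.

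The small-fraction iteration does not help. If at stage $k$ you correct a fraction $t_k$ of the boundary error, then $\|\Delta_k\| \le t_k\|\phi_k\|_E$ and $\phi_{k+1} = (1-t_k)\phi_k$, so the total norm added is $\sum_k t_k\|\phi_k\|_E = \|\phi_0\|_E\sum_k t_k\prod_{i<k}(1-t_i)$, which telescopes to $\|\phi_0\|_E\bigl(1-\prod_k(1-t_k)\bigr)$ and tends to $\|\phi_0\|_E\approx 2$, not to $\ep$. Reapplying the Pick hypothesis to shifted targets $w_j'$ is illegitimate: the hypothesis speaks only of the fixed data $w_j$, and the only way to pass to nearby data is via your $L_j$, which again costs a fixed multiple of the shift. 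Your closing appeal to cancellation (``$F_k+\Delta_k\approx f$ near $E$ and $\approx F_k$ elsewhere'') is the right instinct but cannot be made uniform: the neighborhood of $E$ on which $|F_k+\Delta_k|$ is close to $|f|$ depends on $N$ through $\Delta_k=g^N H_k$, while forcing $|\Delta_k|$ small off that neighborhood also requires choosing $N$ after the neighborhood is fixed. The circularity is real, because near $E$ the Pick solution $F_k$ itself may have modulus close to $1$, so $|F_k|+|\Delta_k|$ is genuinely near $2$ on a set that shrinks only as $N$ grows.

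The missing idea, which the paper supplies, is a \emph{multiplicative} damping of the Pick solution rather than an additive correction. One first multiplies the Pick interpolant $l$ by a function $f_m\in A$ with $f_m|E=0$, $\|f_m\|\le 1$, and $f_m(\alpha_j)$ close to $1$ (an $m$-th root of $(1-r)/2$ for $r$ peaking on $E$). The product $g=f_m l$ then vanishes on $E$, still satisfies $\|g\|\le\|l\|\le 1+\ep/2$, and has $g(\alpha_j)\approx w_j$. Now one can choose a neighborhood $U$ of $E$ on which $|g|<\ep/2$ \emph{before} making any further choice, take $h\in A$ with $h|E=f$, $\|h\|\le 1$, and $|h|<\ep/2$ on $X\setminus U$, and set $G=g+h$. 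On $U$ one has $|G|\le \ep/2+1$; on $X\setminus U$ one has $|G|\le(1+\ep/2)+\ep/2$; hence $\|G\|\le 1+\ep$, with $G|E=f$ and $|G(\alpha_j)-w_j|<\ep$. A single $L_j$-correction of size $O(\ep)$ then finishes. The decisive point is that killing $l$ on $E$ by multiplication costs nothing in norm; every purely additive scheme pays $\|\phi_0\|_E$.
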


Specializing to the disc algebra gives the following corollary that combines Pick interpolation with the Rudin-Carleson theorem \cite {Carl, Rudin} (or see \cite [Theorem~II.12.6] {Gamelin}).  The proof is an immediate application of the above proposition 
% page 6
and the theorems of Pick and of Rudin and Carleson.

\begin{theorem}\label{Corollary 2.2}
Let $E$ be a closed subset of $\partial D$ of 1-dimensional Lebesgue measure zero, let $f$ be a continuous complex-valued function on $E$ with $\lf \le 1$, let $(z_1 \cd z_n)$ be an n-tuple of distinct points in the open unit disc $D$, and let 
$(w_1 \cd w_n)$ be an n-tuple of complex numbers.  Then for every $\ep > 0$ there is a function $F$ in the disc algebra with $\| F \|
 \le 1 + \ep$ such that $F|E=f$ and $F(z_j)=w_j, 1 \le j \le n$, if and only if the matrix %{\bf{MATRIX GOES HERE}}
$$\pickmatrix$$
is positive semi-definite.
\end{theorem}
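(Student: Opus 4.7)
The plan is as follows. Given $\varepsilon>0$, I would first apply the hypothesis to produce $G\in A$ with $G(\alpha_j)=w_j$ for every $j$ and $\|G\|_X\le 1+\varepsilon/2$. The remaining task reduces to finding $H\in A$ with $H|_E=f-G|_E$, $H(\alpha_j)=0$ for every $j$, and $\|G+H\|_X\le 1+\varepsilon$; setting $F=G+H$ will then complete the proof.

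To construct $H$, I would use the peak interpolation property of $E$ together with multiplication by a peak function. Bishop's peak interpolation theorem supplies an extension $\tilde H\in A$ of $h:=f-G|_E$ with $\tilde H|_E=h$ and $|\tilde H(x)|<\|h\|_E$ for every $x\in X\setminus E$. Choose also a peak function $p\in A$ for $E$ (so that $p|_E=1$ and $|p|<1$ on $X\setminus E$) and consider $H_N:=\tilde H\cdot p^N$. Then $H_N|_E=h$ while $H_N(\alpha_j)=\tilde H(\alpha_j)\,p(\alpha_j)^N\to 0$ as $N\to\infty$. To force the values at the $\alpha_j$ to be exactly zero, subtract a correction $C_N\in A$ with $C_N|_E=0$ and $C_N(\alpha_j)=H_N(\alpha_j)$; such $C_N$ exists with $\|C_N\|_X\to 0$ by the open mapping theorem applied to the bounded surjection $\{F\in A : F|_E=0\}\to\mathbb{C}^n$ given by evaluation at the $\alpha_j$, whose surjectivity I would establish via the characterization of peak interpolation sets by annihilating measures together with the fact that $A$ separates points.

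The chief obstacle is then the global norm bound $\|G+H_N-C_N\|_X\le 1+\varepsilon$. On $E$ this is immediate since $(G+H_N-C_N)|_E=f$ has modulus at most $1$, and on any compact subset of $X\setminus E$ the factor $|p|^N$ decays uniformly, so the function is close to $G$ and the bound $\|G\|_X\le 1+\varepsilon/2$ suffices. The delicate region is a neighborhood of $E$ (with $E$ itself removed) that shrinks as $N$ grows, where $|p(x)|^N$ may still be close in modulus to $1$; the triangle inequality is too crude there, and the needed estimate must exploit both the continuity of the constructed function and the exact agreement of its restriction to $E$ with $f$. I expect this transition-region estimate to be the heart of the proof, and it may require a more refined construction than just $\tilde H\cdot p^N$, perhaps replacing $p$ by a peak function such as $(1+p)/2$ chosen so that $|p|$ close to $1$ forces $p$ itself close to $1$, thereby tying the decay of $|p|^N$ off $E$ to the convergence of $p^N$ to $1$ near $E$ and allowing the cancellation needed to squeeze the bound down to $1+\varepsilon$.
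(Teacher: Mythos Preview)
Two issues.  First, the statement you are proving is the disc-algebra corollary, and the paper dispatches it in one line by quoting three ingredients: Pick's theorem (positive semi-definiteness of the matrix is equivalent to solvability of the pure point problem in $A(D)$ with norm $\le 1+\varepsilon$ for every $\varepsilon>0$), the Rudin--Carleson theorem (a closed measure-zero subset of $\partial D$ is a peak interpolation set for $A(D)$), and the abstract Theorem~2.1.  Your write-up never names Pick or Rudin--Carleson, does not touch the ``only if'' direction, and---judging from the notation $A,X,\alpha_j$---is really an attack on Theorem~2.1 rather than on the corollary as stated.

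Second, read as a proof of Theorem~2.1, your decomposition has the gap you yourself flag, and your proposed fix does not close it.  Since $G|_E$ is uncontrolled, $\|f-G|_E\|_E$ may be as large as $2+\varepsilon/2$, hence so may $\|\tilde H\|$.  Replacing $p$ by $q=(1+p)/2$ does force $q(x)\to 1$ whenever $|q(x)|\to 1$, but in the transition region $|q(x)|^N\in[\delta,1-\delta]$ one typically has $1-|q(x)|\sim 1/N$ while $\arg q(x)$ can be of order $N^{-1/2}$, so $\arg q(x)^N\sim N^{1/2}$ and $q(x)^N$ sweeps around its circle; then $|G(x)+\tilde H(x)q(x)^N|$ can be as large as $|G(x)|+|\tilde H(x)|\approx 3+\varepsilon$.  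The paper avoids this by reversing the roles of the two pieces.  It multiplies the \emph{point} interpolant $l$ (norm $\le 1+\varepsilon/2$) by a function $f_m\in A$ with $\|f_m\|\le 1$, $f_m|_E=0$, and $f_m(\alpha_j)\to 1$ (concretely $f_m=((1-p)/2)^{1/m}$), obtaining $g=f_ml$ with $g|_E=0$, $\|g\|\le 1+\varepsilon/2$ everywhere, and $g(\alpha_j)\approx w_j$.  One then picks a neighbourhood $U$ of $E$ on which $|g|<\varepsilon/2$, and an extension $h\in A$ of $f$ with $\|h\|\le 1$ and $|h|<\varepsilon/2$ on $X\setminus U$; the sum $g+h$ satisfies $\|g+h\|\le 1+\varepsilon$ by a clean $U$ versus $X\setminus U$ split with no transition region, and a small finite-rank correction (your $C_N$ idea, via functions $\varphi_j$ vanishing on $E$) makes the values at the $\alpha_j$ exact.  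The moral: damp the function whose norm is already close to $1$, not the one whose norm may be close to $2$.
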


%page 7
Before proving Proposition 2.1, we establish a lemma.

\begin{lemma}\label{Lemma:}
Given a uniform algebra $A$ on a compact space $X$ and a peak set $E$ for $A$, there is a sequence of functions $(f_m)_{m=1}^\infty$ in $A$ with $\| f_m \| \le 1$ for all $m$ such that $f_m = 0$ on $E$ for all $m$ but $f_m \rightarrow 1$ pointwise on $X \setminus E$.
\end{lemma}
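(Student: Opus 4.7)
The plan is to construct each $f_m$ as a single rational function of a peaking function $g \in A$ for $E$; recall that the definition of peak set provides $g \in A$ with $g = 1$ on $E$, $|g| < 1$ on $X \setminus E$, and hence $\|g\| = 1$. My candidate is
\[
f_m = \frac{m(1-g)}{1 + m(1-g)}.
\]
This is an element of $A$: since $\|g\| \le 1$, the spectrum of $g$ in $A$ lies in $\overline D$, so the spectrum of $1 + m(1-g)$ lies in the half-plane $\{\operatorname{Re} z \ge 1\}$ and in particular avoids $0$. Thus $1 + m(1-g)$ is invertible in $A$, and $f_m \in A$.

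Two of the three required properties follow at once. On $E$ one has $g = 1$, so $1 - g = 0$ and therefore $f_m = 0$. Off $E$ one has $g \neq 1$, so $|m(1-g(x))| \to \infty$ as $m \to \infty$, and hence $f_m(x) \to 1$ pointwise on $X \setminus E$.

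The main obstacle, and the reason for choosing this particular rational form rather than the more obvious $1 - g^m$, is the uniform norm bound $\|f_m\| \le 1$. To verify it, fix $x \in X$ and set $w = 1 - g(x)$. Since $|g(x)| \le 1$ forces $\operatorname{Re} g(x) \le 1$, we have $\operatorname{Re} w \ge 0$, and then
\[
|1 + mw|^2 - |mw|^2 = 1 + 2m \operatorname{Re} w \ge 1,
\]
so $|1 + mw| \ge m|w|$, which rearranges to $|f_m(x)| \le 1$. The underlying geometric point is that the M\"obius map $w \mapsto mw/(1+mw)$ contracts the closed right half-plane into the closed unit disc, which converts the elementary half-plane inequality $\operatorname{Re}(1 - g) \ge 0$ into exactly the norm bound we need; the candidate $1 - g^m$ has no such property, for instance failing when $g$ takes values close to $-1$.
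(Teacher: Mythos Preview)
Your argument is correct. The invertibility step is sound: since $\|g\|=1$, the spectrum of $g$ lies in $\overline D$, and the polynomial spectral mapping theorem then places the spectrum of $1+m(1-g)$ in $\{\operatorname{Re}z\ge 1\}$, so the inverse exists in $A$. The norm bound and the pointwise limit are verified exactly as you wrote.

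Your route is genuinely different from the paper's. The paper sets $h=(1-g)/2$, observes that $h$ takes values in the disc $\Delta=\{|z-\tfrac12|\le\tfrac12\}$, and then defines $f_m=h^{1/m}$ via the continuous branch of the $m$th root on $\Delta$, appealing to polynomial approximation of that branch to land in $A$. Both constructions feed the same ``half--plane'' information $\operatorname{Re}(1-g)\ge 0$ through a map sending the right half--plane into $\overline D$ while fixing $0$ and pushing every other point toward $1$; the paper uses the conformal root $w\mapsto w^{1/m}$ (after rescaling into $\Delta$), while you use the M\"obius map $w\mapsto mw/(1+mw)$. The practical difference is that your version needs only rational functional calculus (invertibility in $A$), whereas the paper's version needs a short approximation argument to justify $h^{1/m}\in A$. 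On the other hand, the $m$th--root picture makes the convergence $f_m\to 1$ geometrically transparent on $\Delta$. Either approach proves the lemma with no real change in difficulty.
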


\begin{proof}
Let $f$ be a function in $A$ that peaks on $E$.  Set $g = (1-f)/2$.  Then the range of $g$ lies in the closed disc $\Delta = \{ z: |z- {1 \over 2} | \le {1 \over 2} \}$.  Note that on $\Delta$ there is a well-defined continuous $m^{\rm th}$ root function $\gamma$ with $\gamma (1) =1$.  Since $\gamma$ is holomorphic on $\rm{Int} (\Delta)$, there is a sequence of polynomials $(p_k)_{k=1}^\infty$ such that $p_k \rightarrow \gamma$ uniformly on $\Delta$.  Consequently, $g$ has an $m^{\rm th}$ root $g^{1/m} = \gamma \circ g$ in $A$.  Note that $g^{1/m}=0$ on $E$ and $\| g^{1/m} \| \le 1$ (since $\| g \| \le 1$).  For $x \in X \setminus E$, we have $g(x) \ne 0$, so $g^{1/m} (x) \rightarrow 1$ as $m \rightarrow \infty$.  Thus setting $f_m = g^{1/m}$ gives the lemma.
\end{proof}

% page 8
\begin{proof}[Proof of Theorem 2.1]
{\it Step 1}:
We show that for each $\ep >0$ there is a function $G$ in $A$ with $\| G \| \le 1 + \ep$ such that $G|E=f$ and $|G(\alpha_j) - w_j | < \ep\ (j=1 \cd n)$.

By hypothesis there is a function $l$ in $A$ with $\| l \| \le 1 +(\ep/2)$ and 
$l (\alpha_j)=w_j\ (j=1 \cd n)$.  Let $(f_m)$ be a sequence as in the lemma above.  Put
$$g_m (z)=f_m(z) l(z).$$
As $m \rightarrow \infty$, note that $g_m \rightarrow l$ pointwise on $X \setminus E$, so
$g_m(\alpha_j) \rightarrow w_j$ for each $j$.  Also
$ \| g \| \le \| l \| \le 1 + (\ep /2)$. Choose $m_0$ large enough that
$| g_{m_0} (\alpha_j) -w_j | < \ep /2$ for all $j$, and set $g=g_{m_0}$.  Since each $f_m$ vanishes on $E$, so does $g$, so we can choose a neighborhood $U$ of $E$ on which $|g| < \ep /2$.  Furthermore we can choose $U$ so as to contain none of the $\alpha_j$.

Since $E$ is a peak interpolation set, we can choose a function $h$ in $A$ with 
$\| h \| \le 1$ such that $h|E=f$ and $|h|< \ep /2$ on $X \setminus U$.  (To obtain such a function $h$, start with a function in $A$ satisfying the first two conditions, and then multiply by a sufficiently high power of a function that peaks on $E$ to achieve the last condition.)
Set
$$G=g+h.$$
Then $G$ is in $A$.
Applying the inequality $|G(x)|\le |g(x)|+|h(x)|$ separately for $x$ in $U$ and in $X \setminus U$, we see that $\| G \| \le 1 + \ep$.  Since $g=0$ on $E$, we have $G=h=f$ on $E$.  Finally 
$|G(\alpha_j)-w_j|=|g(\alpha_j)+h(\alpha_j)-w_j| \le |g(\alpha_j)-w_j|+|h(\alpha_j)|<\ep$.

% page 9
{\it Step 2}:
We complete the proof.  Choose a function $r$ in $A$ that peaks on $E$, i.e., such that $r=1$ on $E$ and $|r|<1$ on $X \setminus E$.  Since the functions in $A$ separate points, there is a function $k_1$ in $A$ that vanishes at $\alpha_2 \cd \alpha_n$ but is nonzero at $\alpha_1$.  
Then the function $(1-r) k_1$ is zero on $E$ and at $\alpha_2 \cd \alpha_n$ but is nonzero at $\alpha_1$
By multiplying by a constant, we obtain a function $\varphi_1$ in $A$ that is $1$ at $\alpha_1$ and  zero on $E$ and at $\alpha_2 \cd \alpha_n$.  
Similarly we obtain, for each $j$, a function $\varphi_j$ in $A$ that is $1$ at 
$\alpha_j$ and zero on $E$ and at the other $\alpha$'s.  
Let $M= \max\limits_{1\le j\le n} \| \varphi_j \|$.
Take $G$ as in Step 1 and set $\sigma_j=G(\alpha_j)-w_j$.  Finally set
$$F=G-\sum^n_{j=1} \sigma_j \varphi_j.$$
Then $F$ is in $A$,
\begin{eqnarray*}
\| F \| &\le& \|G \| + \sum\limits^n_{j=1}  |\sigma_j| \| \varphi_j \| \\
&\le& \|G \|  + \ep n M \\
&\le& 1+ \ep (1+nM),
\end{eqnarray*}
$F|E= G|E=f$, and $F(\alpha_j)=G(\alpha_j) -\sigma_j=w_j$
for each $j$.  Since $\ep$ is arbitrary and $M$ is independent of $\ep$, this proves the proposition.
\end{proof}

\begin{remark}
{\rm Theorem 2.1 remains valid with \lq\lq peak interpolation set" replaced by \lq\lq generalized peak interpolation set" where by \lq\lq generalized peak interpolation set" we mean a set that is a generalized peak set (i.e., an intersection of peak sets) and an interpolation set.  The proof goes through almost unchanged.  We choose a peak set $E'$ containing $E$ and avoiding all the $\alpha_j$'s and apply the lemma to $E'$ to get a sequence $(f_m)$.  We obtain $g$ from $(f_m)$ as before and take $U$ to be a neighborhood of $E'$ avoiding the $\alpha_j$'s with $|g|<\ep$ on $U$.  We then obtain a function $h$ in $A$ with $\| h \| \le 1$ such that $h|E=f$ and $|h|< \ep$ on $X\setminus U$ as before.  The rest of Step 1 goes through as before.  Step 2 is unchanged except that we take $r$ to peak on $E'$.}
\end{remark}

%\bigskip\bigskip\bigskip

%\noindent 
%\leftline{Department of Mathematics and Statistics}
%\leftline{Bowling Green State University}
%\leftline{Bowling Green, OH 43403}
%\leftline{aizzo@math.bgsu.edu}\bigskip\bigskip

\end{document}